\newtheorem{thm}{Theorem}[section]
\newtheorem{lem}[thm]{Lemma}
\newtheorem{cor}[thm]{Corollary}
\newtheorem{problem}[thm]{Problem}
\newcommand{\F}{\mathbb{F}_{q}}
\newcommand{\Zqn}{\mathbb{Z}_{q^n-1}}
\newcommand{\Fn}{\mathbb{F}_{q^n}}
\newcommand{\Tn}{\operatorname{Tr}_{\mathbb{F}_{q^n}/\F}}
\newcommand{\supp}{\operatorname{supp}}
\begin{document}
 \title{Irreducible polynomials with prescribed sums of coefficients}

 \author{Aleksandr Tuxanidy and Qiang Wang}

\address{School of Mathematics and Statistics, Carleton
University,
 1125 Colonel By Drive, Ottawa, Ontario, K1S 5B6,
Canada.} 

\email{AleksandrTuxanidyTor@cmail.carleton.ca, wang@math.carleton.ca}
 
 \keywords{Irreducible polynomials, primitive polynomials, Hansen-Mullen conjecture, symmetric functions, discrete Fourier transform, finite fields.\\}

\thanks{The research of Qiang Wang is partially supported by NSERC of Canada.}
\date{\today}

 \font\Bbb msbm10 at 12pt

\begin{abstract}

Let $q$ be a power of a prime, let $\mathbb{F}_q$ be the finite field with $q$ elements and let $n \geq 2$. For a polynomial $h(x) \in \F[x]$ of degree $n \in \mathbb{N}$ 
and a subset $W \subseteq [0,n] := \{0, 1, \ldots, n\}$, 
we define the sum-of-digits function
$$S_W(h) = \sum_{w \in W}[x^{w}] h(x)$$ 
to be the sum of all the coefficients of $x^w$ in $h(x)$ with $w \in W$. 
In the case when $q = 2$, we prove, except for a few genuine exceptions, that for any $c \in \mathbb{F}_2$ and any $W \subseteq [0,n]$
there exists an irreducible polynomial $P(x)$ of degree $n$ over $\mathbb{F}_2$ 
such that $S_{W}(P) = c$.
In particular, restricting ourselves to the case when $\# W = 1$,
we obtain a new proof of the Hansen-Mullen irreducibility conjecture (now a theorem) in the case when $q = 2$. 
In the case of $q> 2$, we prove that, for any $c \in \mathbb{F}_q$, any $n\geq 2$ and any $W \subseteq [0,n]$, there exists an irreducible polynomial $P(x)$ of degree $n$ such that $S_{W}(P) \neq c$. 

\end{abstract}

 \maketitle

 \section{Introduction}

 Let $q$ be a power of a prime $p$, let $\F$ be the finite field with $q$ elements, and let $n \geq 2$.
 In 1992, Hansen-Mullen \cite{hansen-mullen} conjectured (Conjecture B) that, except for a few genuine exceptions, 
 there exist monic irreducible (and more strongly primitive; see Conjecture A) polynomials of degree $n$ over $\F$ with 
 any {\em one} of its coefficients prescribed to any value. Conjecture B was proven by Wan \cite{wan} in 1997 for all but finitely many cases, 
 with the remaining cases being computationally verified soon after in \cite{ham-mullen}.
In 2006, Cohen \cite{cohen 2006}, particularly building on some of the work of Fan-Han \cite{fan-han} on $p$-adic series, proved 
there exists a monic primitive polynomial of degree $n \geq 9$ with any one of its coefficients prescribed. 
The remaining cases of Conjecture A were settled by Cohen-Pre\v{s}ern in \cite{cohen-presern 2006, cohen-presern 2008}.  Recently the authors \cite{Tuxanidy-Wang-H-M conjecture} reproved Conjecture B in an elementary way, through  studying an interesting connection between irreducible polynomials of degree $n$ over $\F$ 
and the least period of the discrete Fourier transform (DFT) of cyclic functions 
with values in a finite field.

Natural generalizations of the Hansen--Mullen conjectures to {\em several} prescribed coefficients is currently an active area of research.
For irreducible polynomials of degree $n$ over $\F$, 
Garefalakis \cite{garefalakis} has shown that one can prescribe 
roughly $n/3$ consecutive zero coefficients. Panario--Tzanakis \cite{George paper} (see also
\cite{George thesis}) have in particular proved that if $n \geq 22$ and $q \geq 107$, then one can arbitrarily prescribe
both the first coefficient and another coefficient. 
In 2013 Pollack \cite{pollack} has shown that for large enough $n$, one can prescribe roughly $\sqrt{n}$ coefficients to any value.  Recently, Ha \cite{Ha}
showed that there is a monic irreducible polynomial of degree $n$ with $r$ coefficients prescribed in any location when $r\leq [(1/4 - \epsilon)n]$ for any $\epsilon >0$ and $q$ is large; 
and when $r \leq \delta n$ for some $\delta>0$ and for any $q$.

In the special case of monic primitive polynomials, for sufficiently large $q$ (depending on $n$) 
it is known that up to the first $\lfloor n/2 \rfloor$ coefficients can be prescribed. See the work of Ren \cite{ren} and Han \cite{han} for this. 
Specifically in the case when $q = 2$, Shparlinski \cite{Shparlinski} showed that, for sufficiently large $n$ (in an unspecified manner) 
there exists a primitive polynomial of degree $n$ over $\mathbb{F}_2$ with Hamming weight 
(i.e., number of non-zero coefficients) $n/4 + o(n)$.
Cohen \cite{Cohen 2004} later showed in particular, also in the case of $q = 2$, 
that we can prescribe either the first or last $m \leq n/4$ coefficients of primitive polynomials of degree $n$ (for any $n$) over $\mathbb{F}_2$ to any value. 

There are some differences of approach in tackling existence questions of either general irreducible or primitive polynomials with prescibed coefficients. 
For instance, when working on irreducibles, and following in the footsteps of Wan \cite{wan}, 
it has been common practice to exploit the function field analogue of Dirichlet's theorem for primes in arithmetic progressions; 
all this is done via Dirichlet characters on $\F[x]$, $L$-series, zeta functions, etc.
See for instance \cite{George thesis}.
On the other hand, in the case of primitives, the problem is usually approached via $p$-adic rings or fields 
(to account for the inconvenience that Newton's identities break down in fields of positive characteristic) 
together with Cohen's sieving lemma, Vinogradov's characteristic function, etc. (see for example \cite{fan-han, cohen 2006}).
However there is one common feature these two methods share, namely, when bounding
the ``error'' terms comprised of character sums, the function field analogue of Riemann's hypothesis (Weil's bound) is used. 
Nevertheless as a consequence of its $O(q^{n/2})$ nature it transpires 
a difficulty in extending the $n/2$ threshold for the number of coefficients one can prescribe in irreducible or particularly primitive polynomials of degree $n$. 

In this work we consider the following related problem. 
First for a polynomial $h(x) \in \F[x]$ of degree $n \in \mathbb{N}$ and $W \subseteq [0,n] = \{0, 1, \ldots, n\}$, we define the sum-of-digits function $S_W(h)$ by 
$$
S_W(h) := \sum_{w \in W}[x^{w}]h(x),
$$
the sum of all the coefficients of $x^w$ in $h(x)$ such that $w \in W$. 

\begin{problem}\label{problem}
Let $n \geq2$. For what elements $c \in \F$ and sets $W \subseteq [0,n]$ can we find a monic irreducible polynomial $P(x)$ of degree $n$ over $\F$ 
such that $S_W(P) = c$?
\end{problem}

Obviously if one can prove existence of monic irreducible polynomials with  prescribed coefficients for any $W$, then Problem \ref{problem} follows automatically. 
For example, the work in \cite{Ha} implies that whenever $W$ is roughly of cardinality $< n/4$, that we can prescribe $S_W(h)$ to any value. 
For $q$ large enough (depending on $n$), the result follows whenever $\#W \leq \lfloor n/2 \rfloor$ (see \cite{han, ren}). In the case when $q=2$ and $\#W \leq n/4$ with $W \subset [0, n/2)$ 
or $W \subset [n/2, n)$, Problem \ref{problem} follows automatically from \cite{Cohen 2004}. 
In fact, Problem \ref{problem} is much less ambitious than the  one of prescribing several coefficients.
Nevertheless when $W$ is sufficiently large 
(say $\#W > n/2$ roughly) Problem \ref{problem} is, to the knowledge of the authors, unsolved. 
We expect in most cases, except perhaps for some genuine exceptions, 
that $S_W(P)$, where $P(x)$ runs over monic irreducibles of degree $n$ over $\F$, 
can be prescribed to any value $c \in \F$ for any $W \subseteq [0,n]$.

The sum-of-digits function $S_w(h)$ for $\F[x]$ bears some resemblance to the function $s_{b, \mathcal{I}}(n)$ for $\mathbb{N}$ of the sum of $b^i$-digits, $i \in \mathcal{I} \subseteq \mathbb{N}_0$,
in the base $ b \geq 2$ 
expansion of $n \in \mathbb{N}$. We can write $n$ uniquely as 
$$n = \sum_{i=0}^\infty a_i b^i$$
with each $0 \leq a_i \leq b-1$.
Then
$$s_{b, \mathcal{I}}(n) := \sum_{i \in \mathcal{I}} a_i.$$
In the special case when $\mathcal{I} = \mathbb{N}_0$, 
the question regarding the distribution of the values of $s_{b, \mathbb{N}_0}(\ell)$ at the primes $\ell \in \mathbb{N}$, with $\ell$ at most a given $X \in \mathbb{R}$, 
has attracted substantial research. For instance Mauduit-Rivat \cite{Mauduit-Rivat} 
recently proved that the values of $s_{b, \mathbb{N}_0}(\ell)$ are asymptotically evenly distributed at the prime numbers $\ell$. 
For example, on average, there are as many prime numbers for which the sum of its decimal digits is even as prime numbers for which the sum is odd.
See also \cite{Drmota-Rivat-Stoll, Morgenbesser} for the analogy in the ring of Gaussian integers $\mathbb{Z}[\sqrt{-1}]$. 
Note it has been raised as an open problem in \cite{Koc} (see Open Problem 18, \S 4.4, p.123) to give an $\F[x]$-analogue 
of the sum-of-digits $s_{b, \mathbb{N}_0}(\ell)$ problem in \cite{Mauduit-Rivat}.

In this work we completely settle Problem \ref{problem} in the case when $q = 2$. 

 \begin{thm}\label{thm: main q = 2}
 Let $n \geq 2$, let $c \in \mathbb{F}_2$ and let $W \subseteq [0,n]$. 
 Then there exists an irreducible polynomial $P(x)$ of degree $n$ over $\mathbb{F}_2$ such that $S_{W}(P) = c$ if and only if 
 \begin{align}\label{eqn: exc1}
 (c, W) \neq \  &(0, \{0\}), \ (0, \{n\}), \ (0, [0,n]), \ (0, [1,n-1]), \\
 & (1, \{0,n\}), \ (1, [0, n-1]), \ (1, [1,n]). \nonumber
 \end{align}
 \end{thm}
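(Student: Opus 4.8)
The plan is to pass from polynomials to elementary symmetric functions of the roots, dispose of the listed pairs by a one‑line observation, and settle the existence half via the discrete Fourier transform (DFT) criterion of \cite{Tuxanidy-Wang-H-M conjecture}. A monic irreducible $P$ of degree $n$ over $\mathbb{F}_2$ is the minimal polynomial of a root $\alpha\in\mathbb{F}_{2^n}$ of degree $n$, and $P(x)=\sum_{k=0}^{n}e_k(\alpha)x^{\,n-k}$, where $e_k(\alpha)$ is the $k$‑th elementary symmetric function of $\alpha,\alpha^2,\dots,\alpha^{2^{n-1}}$; equivalently $e_k(\alpha)=\sum_{S\subseteq[0,n-1],\,|S|=k}\alpha^{\,\sum_{i\in S}2^{i}}$. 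Hence, putting $V:=\{n-w:w\in W\}$ and $F_V:=\sum_{k\in V}e_k$, we get $S_W(P)=F_V(\alpha)$, and Theorem \ref{thm: main q = 2} is equivalent to: \emph{for $c\in\mathbb{F}_2$ there is an $\alpha\in\mathbb{F}_{2^n}$ of degree $n$ over $\mathbb{F}_2$ with $F_V(\alpha)=c$ iff $(c,W)$ is not one of the listed pairs.}

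For the ``only if'' (the listed pairs are genuine exceptions): since $\deg P=n\ge2$ and $P$ is irreducible, $[x^0]P=[x^n]P=P(1)=1$, so whenever $\mb{1}_W$ lies in the $\mathbb{F}_2$-span of $\mb{1}_{\{0\}},\mb{1}_{\{n\}},\mb{1}_{[0,n]}$ the value $S_W(P)$ is a fixed $\mathbb{F}_2$-combination of these three $1$'s, hence constant. Enumerating the eight subsets $W$ in this span and computing the constant shows the bad pairs are exactly those with $\mb{1}_W$ in the span and $c$ equal to the complement of that constant, which is precisely \eqref{eqn: exc1} (the pair with $W=\emptyset$ being the trivial one). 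For the ``if'' direction we invoke the sufficient condition of \cite{Tuxanidy-Wang-H-M conjecture}: a function on $\mathbb{F}_{2^n}^{*}$ whose DFT has least period $2^n-1$ has an element of degree $n$ over $\mathbb{F}_2$ in its support. Apply it to $g:=F_V+(c+1)$, restricted to $\mathbb{F}_{2^n}^{*}$, whose support is $\{\alpha:F_V(\alpha)=c\}$. As the $(2^n-1)$‑th roots of unity lie in $\mathbb{F}_{2^n}$, the DFT of $\alpha\mapsto\alpha^{m}$ is the indicator of $\{-m\bmod(2^n-1)\}$; combined with the monomial expansion of $e_k$ and the fact that $S\mapsto\sum_{i\in S}2^i\bmod(2^n-1)$ is a bijection from subsets of $[0,n-1]$ onto $\mathbb{Z}_{2^n-1}$ except that $\emptyset$ and $[0,n-1]$ both land on $0$, a short computation gives
\[
\widehat g(j)=\mb{1}[\operatorname{wt}(j)\in W]\quad(j\neq0),\qquad \widehat g(0)=\mb{1}[0\in W]+\mb{1}[n\in W]+(c+1),
\]
where $\operatorname{wt}(j)$ is the number of $1$'s in the $n$-digit binary expansion of $j$.

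The theorem now reduces to the purely combinatorial claim that \emph{$\widehat g$ has least period $2^n-1$ unless $(c,W)$ is a listed pair}. Indeed, $\widehat g$ is constant iff (by Fourier inversion) $g$ vanishes off $\{1\}$, i.e. $F_V$ is constant on $\mathbb{F}_{2^n}^{*}\setminus\{1\}$; unwinding, this is exactly ``$\mb{1}_W$ in the span above with $c$ the wrong value'', i.e. a listed pair, and in that case $\widehat g$ never meets a degree‑$n$ element. When $\widehat g$ is non‑constant we must show its least period equals $2^n-1$, i.e. that $\widehat g$ is not invariant under translation by $t$ for any proper divisor $t>1$ of $2^n-1$. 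If $W\cap[1,n-1]\in\{\emptyset,[1,n-1]\}$ then $\widehat g$ equals $\mb{1}_{\{0\}}$ or $1+\mb{1}_{\{0\}}$, which visibly has least period $2^n-1$. If $W\cap[1,n-1]$ is a proper non‑empty subset of $\{1,\dots,n-1\}$, we exhibit, for each proper divisor $t>1$, a coset of $\langle t\rangle\le\mathbb{Z}_{2^n-1}$ carrying two elements whose digit sums $a,b\in[1,n-1]$ satisfy $|\{a,b\}\cap W|=1$, so that $\widehat g$ is non‑constant on that coset. When $d:=\ord_t(2)$ is a proper divisor of $n$ (so $2\le d\le n/2$ and $t\mid2^d-1$) this is explicit: choose $k\in[1,n-2]$ with $\mb{1}[k\in W]\neq\mb{1}[k+1\in W]$, use the digit‑complement symmetry $j\mapsto 2^n-1-j$ to arrange $k\le n-d$, pick an integer $M$ of digit sum $k-1$ with all bits in $\{d+1,\dots,n-1\}$, and note that $j_1:=2+M$ and $j_2:=1+2^d+M$ have digit sums $k$ and $k+1$ while $j_2-j_1=2^d-1\equiv0\pmod t$.

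The main obstacle is the remaining divisors $t$ with $\ord_t(2)=n$ (the ``primitive'' ones, which occur once $2^n-1$ is composite with a primitive prime‑power factor): there the consecutive‑digit‑sum trick breaks down, and one must use the full flexibility ``some $a\in W$, $b\notin W$ in a common coset''. I expect this to be handled by exploiting that each digit‑sum layer $\{j:\operatorname{wt}(j)=k\}$ and each coset of $\langle t\rangle$ is invariant under multiplication by $2$ — so the cosets met by a layer form a union of $\langle2\rangle$-orbits among the $t$ cosets — together with a size estimate on the layers $\binom{n}{k}$ ruling out that the ``$W$'' layers and the ``$[1,n-1]\setminus W$'' layers are coset‑disjoint; the cases $2^n-1$ prime (no proper $t>1$) and $n=2,3$ (every $W$ degenerate, resp. a direct check) are immediate. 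Granting this, $\widehat g$ has least period $2^n-1$ exactly off the list, the DFT criterion supplies a degree‑$n$ $\alpha$ with $F_V(\alpha)=c$, and hence a monic irreducible $P$ of degree $n$ with $S_W(P)=c$, completing the proof.
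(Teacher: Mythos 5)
Your reduction is sound and is essentially the paper's: you pass to the characteristic elementary symmetric functions, compute that the DFT of $g=F_V+(c+1)$ is the weight-indicator function $\widehat g=\delta_S$ for the appropriate $S\subseteq[0,n-1]$ (this is exactly the paper's $\Delta_{R,c}=\delta_R+(c+1)\delta_0=\delta_S$ in Lemma \ref{lem: gamma and delta functions}(ii)), identify the exceptional pairs with the cases where $\widehat g$ is constant, and invoke the least-period criterion of Lemma \ref{thm: connection}. Your handling of the exceptions, and of the divisors $t$ of $2^n-1$ for which $d=\ord_t(2)$ is a proper divisor of $n$ (via $j_2-j_1=2^d-1\equiv 0 \pmod t$), is correct.

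However, there is a genuine gap exactly where the real work lies: the divisors $t$ with $\ord_t(2)=n$. You explicitly defer this case (``I expect this to be handled by\dots''), and the counting heuristic you sketch does not close it. The nonzero elements of $\mathbb{Z}_{2^n-1}$ account for $2^n-2$ elements of weight in $[1,n-1]$ distributed among $t$ cosets of $\langle t\rangle$ totalling $2^n-1$ elements, so if $a$ and $b$ are the numbers of cosets occupied by the $W$-layers and by their complement, the bounds $a(2^n-1)/t+b(2^n-1)/t\geq 2^n-2$ and $a+b\leq t$ are simultaneously satisfiable with an exact coset-partition; no contradiction follows from sizes alone, and the $\langle 2\rangle$-invariance of layers and cosets is likewise insufficient by itself. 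This missing case is precisely the content of the paper's Lemma \ref{lem: period 1} (Case 1), which argues differently and uniformly in $t$: writing the putative least period as $r=\sum_{i\in E}2^i$, adding $2^{m-1}$ with $m=\min([0,n-1]\setminus E)$ to step between weight layers and deduce $s,s+1,\dots,n-1,0\in S$, passing to the complement $T=[0,n-1]\setminus S$ to force $\max(T)<|E|$, and deriving the final contradiction from the weight of $\gamma+2^n-1-r$. Without an argument of this kind (or a genuinely completed version of your coset analysis), the key combinatorial lemma --- and hence the theorem --- is not proved.
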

 
 Note Theorem \ref{thm: main q = 2} states that, except for a few genuine exceptions, 
 there is no subset $W \subset [0,n]$ for which all binary irreducible polynomials of degree $n$ have the same parity in the number of non-zero coefficients of $x^w$ for $w \in W$.
 As an immediate consequence we obtain the following.
 
 \begin{cor}[{\bf Hansen-Mullen irreducibility conjecture for $q = 2$}]\label{cor: Hansen-Mullen}
 Let $n \geq 2$, let $0 \leq w < n$ and let $c \in \mathbb{F}_2$. 
 Then there exists an irreducible polynomial $P(x)$ of degree $n$ over $\mathbb{F}_2$ such that $[x^w]P(x) = c$ except when $(w, c) = (0, 0)$ and $(n,w,c) = (2,1,0)$.
 \end{cor}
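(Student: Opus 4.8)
The plan is to obtain Corollary \ref{cor: Hansen-Mullen} as the special case of Theorem \ref{thm: main q = 2} in which $W$ is a singleton. First I would note that for $W = \{w\}$ one has $S_{\{w\}}(P) = [x^w]P(x)$, so that prescribing $[x^w]P(x) = c$ is literally the instance $W = \{w\}$ of the conclusion of Theorem \ref{thm: main q = 2}. Since over $\mathbb{F}_2$ every nonzero polynomial is monic, and $[x^n]P = 1$ for every monic $P$ of degree $n$, restricting to $0 \le w < n$ loses nothing; thus $W = \{w\} \subseteq [0, n-1]$, and Theorem \ref{thm: main q = 2} (an ``if and only if'' statement) reduces the corollary to a bookkeeping check of which of the seven exceptional pairs $(c, W)$ in \eqref{eqn: exc1} can have $W$ a singleton subset of $[0, n-1]$ with $n \ge 2$.

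Running through the list: for $n \ge 2$ the sets $\{0, n\}$, $[0, n-1]$ and $[1, n]$ all have at least two elements, and $[0, n]$ at least three, so none of them is ever a singleton; the set $\{n\}$ is excluded because $w < n$; the set $[1, n-1]$ is a singleton exactly when $n = 2$, in which case it equals $\{1\}$; and $\{0\}$ is always a singleton. Hence the only exceptional pairs compatible with the hypotheses of the corollary are $(c, W) = (0, \{0\})$, i.e. $(w, c) = (0, 0)$, and $(c, W) = (0, [1, n-1])$ with $n = 2$, i.e. $(n, w, c) = (2, 1, 0)$ --- precisely the two exceptions in the statement. The ``only if'' half of Theorem \ref{thm: main q = 2} then shows these are genuine obstructions; alternatively one checks directly that an irreducible polynomial of degree $n \ge 2$ has nonzero constant term, and that the unique irreducible quadratic $x^2 + x + 1$ over $\mathbb{F}_2$ has coefficient $1$ on $x$.

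There is essentially no obstacle to overcome here once Theorem \ref{thm: main q = 2} is in hand; the only step requiring any care is the case analysis matching the singleton condition against \eqref{eqn: exc1}, and in particular noticing that the family $[1, n-1]$ degenerates to a single index precisely at $n = 2$, which is what produces the sporadic exception $(n, w, c) = (2, 1, 0)$ beyond the obvious constant-term exception.
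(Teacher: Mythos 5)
Your proposal is correct and is exactly the paper's route: the corollary is stated there as an immediate consequence of Theorem \ref{thm: main q = 2}, obtained by specializing to $W = \{w\}$ and checking which exceptional pairs in (\ref{eqn: exc1}) can be singletons contained in $[0,n-1]$, which yields precisely $(0,\{0\})$ and, for $n=2$, $(0,[1,n-1]) = (0,\{1\})$. The case analysis you give, including the observation that $[1,n-1]$ degenerates to a singleton only at $n=2$, is complete and accurate.
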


 In the case when $q > 2$, we give the following weaker analogue of Theorem \ref{thm: main q = 2}. We expect that the techniques developed here could be valuable to tackle the cases when $q > 2$ as well, but we leave this for a future work. 
 
 \begin{thm}\label{thm: main q > 2}
 Let $q > 2$ be a power of a prime, let $c \in \F$, let $n \geq 2$ and let $W \subseteq [0,n]$. Then there exists an monic irreducible polynomial $P(x)$ of degree $n$ over $\F$ such that
 $S_W(P) \neq c$. 
\end{thm}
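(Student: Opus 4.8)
The plan is to prove Theorem \ref{thm: main q > 2} by a simple counting (pigeonhole) argument: the map $P \mapsto S_W(P)$ from the set of monic irreducibles of degree $n$ over $\F$ into $\F$ cannot be constant, because the preimage of any single value $c$ is too small. Concretely, I would show that for every $c \in \F$ the number of monic irreducible polynomials $P$ of degree $n$ with $S_W(P) = c$ is strictly less than the total number $I_q(n) = \frac{1}{n}\sum_{d \mid n}\mu(d)q^{n/d}$ of monic irreducibles of degree $n$; hence there must exist at least one with $S_W(P) \neq c$.

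First I would dispose of the trivial cases of $W$. If $W = \emptyset$ then $S_W(P) = 0$ for all $P$, so any $c \neq 0$ works (and $q > 2$ guarantees such a $c$ exists); similarly if $W = \{n\}$ then $S_W(P) = 1$ for every monic $P$, so any $c \neq 1$ works. So assume $W$ contains at least one index $w < n$. In this generic case I would count polynomials by their coefficient vector. Write $P(x) = x^n + a_{n-1}x^{n-1} + \cdots + a_1 x + a_0$. The condition $S_W(P) = c$ is a single (inhomogeneous, but nontrivial since $W$ contains some $w < n$) linear equation on the free coefficients $(a_0, \ldots, a_{n-1}) \in \F^n$; its solution set is an affine hyperplane, of size $q^{n-1}$. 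Thus the number of monic polynomials (irreducible or not) of degree $n$ with $S_W(P) = c$ is exactly $q^{n-1}$. It then suffices to verify that $q^{n-1} < I_q(n)$ is false in general — indeed it is generally \emph{larger} — so a cruder bound is needed: I would instead bound the number of \emph{reducible} monic polynomials of degree $n$ and show that it exceeds $q^{n-1}$ minus a correction, or, more cleanly, argue directly that the number of monic irreducibles with $S_W(P) = c$ is at most $q^{n-1}$ minus the number of reducible ones lying in that hyperplane, which is still not immediately conclusive.

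A cleaner route, and the one I would actually pursue, is this: the number of monic irreducibles of degree $n$ over $\F$ satisfies $I_q(n) > q^{n-1}$ precisely when $q > n$ or so, but in fact $I_q(n) \geq \frac{q^n - 2q^{n/2}}{n}$, which for $q \geq 3$ and all $n \geq 2$ one checks is $> q^{n-1}$ only when $q$ is large relative to $n$. So instead I would compare to the count in the hyperplane \emph{among irreducibles}: if every monic irreducible had $S_W(P) = c$, then all $I_q(n)$ irreducibles would lie in an affine hyperplane $H_c$ of $\F^n$ of size $q^{n-1}$; equivalently, the number of monic irreducibles avoiding $H_c$ would be zero. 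But a standard lower bound — via inclusion over the other $q-1$ parallel hyperplanes, or directly — shows the number of monic \emph{reducible} polynomials of degree $n$ is at most $q^{n-1} + q^{n-2} + \cdots$, i.e. at most $\frac{q^n - q}{q-1} < q^n - q^{n-1}$ for $q \geq 3$, which forces some irreducible to lie outside $H_c$. Making this count precise — bounding $\#\{\text{reducible monic } P : \deg P = n\}$ below $q^n - q^{n-1} = (q-1)q^{n-1}$, which holds comfortably for $q \geq 3$ and every $n \geq 2$ — is the crux. I expect the main obstacle, though minor, is handling the smallest cases ($q = 3$, $n = 2$) where the bounds are tight, which can be done by exact computation of $I_3(2) = 3$ versus $q^{n-1} = 3$: here equality is possible in principle, so one must check by hand that the three monic irreducible quadratics over $\mathbb{F}_3$ are not all in a single $S_W$-hyperplane for any $W$, which follows since, e.g., $x^2+1$, $x^2+x+2$, $x^2+2x+2$ have distinct coefficient sums. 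The general-$q$, general-$n$ argument then goes through by the reducible-polynomial count above.
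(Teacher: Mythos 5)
Your argument hinges on the implication: if every monic irreducible of degree $n$ had $S_W(P)=c$, then all $I_q(n)$ of them would lie in a single affine hyperplane of size $q^{n-1}$ in the coefficient space $\F^n$, which is absurd provided $I_q(n) > q^{n-1}$. That inequality fails whenever $n$ is large relative to $q$: since $I_q(n) = \frac{1}{n}\sum_{d\mid n}\mu(d)q^{n/d} \approx q^n/n$, one has $I_q(n) < q^{n-1}$ as soon as $n > q$ (e.g.\ $q=3$, $n=4$ gives $I_3(4)=18 < 27 = 3^{3}$), so there is no cardinality obstruction to all irreducibles sitting inside one hyperplane. Your attempted rescue --- the claim that the number of reducible monic polynomials of degree $n$ is at most $q^{n-1}+q^{n-2}+\cdots = \frac{q^n-q}{q-1}$ --- is false: the reducibles number $q^n - I_q(n)$, which is roughly $q^n(1-\frac1n)$; already for $q=3$, $n=3$ there are $19$ reducible monic cubics while your bound gives $12$. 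So the pigeonhole route collapses for all but the finitely many pairs with $q$ large compared to $n$, and the tight case $q=3$, $n=2$ that you check by hand is not the main obstacle. What is needed is an \emph{equidistribution} statement (irreducibles do not concentrate on one of the $q$ parallel hyperplanes), not a comparison of cardinalities; classically this is exactly where Weil's bound enters, and avoiding it is the point of the paper.

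The paper's actual proof is of a different nature: setting $R=n-W$, it shows that the $0/1$-valued function $\pi\circ\gamma_{R,c}$ on $\mathbb{Z}_{q^n-1}$ equals a delta function $\delta_S$ for an explicit $S$, proves via Lemma \ref{lem: period 1} that $\delta_S$ has maximum least period $q^n-1$ (except for $(q,S)=(3,\{0,n\})$, handled separately through surjectivity of the norm map on elements of degree $n$), and then invokes Lemmas \ref{lem: period by composition}, \ref{lem: gamma and delta functions}(i) and ultimately Lemma \ref{thm: connection} to force an element of degree $n$ into the support of the relevant function. If you wish to pursue a counting-flavoured proof, you would need to reintroduce character sums over $\F[x]$ and the Riemann hypothesis for function fields to get the equidistribution your argument silently assumes.
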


One of the main ingredients in the proof of Theorem \ref{thm: main q = 2}, \ref{thm: main q > 2}, 
is a seemingly new sufficient condition for a function on $\Fn$ to have an element of degree $n$ over $\F$ in its support, studied in \cite{Tuxanidy-Wang-H-M conjecture}; see also Lemma \ref{thm: connection}.  This unexpected connection to irreducible polynomials of degree $n$ over $\F$ 
is made via the least period of the discrete Fourier transform (DFT) of cyclic functions 
with values in a finite field. 
We exploit this relation by proving, in Lemma \ref{lem: period 1}, that the DFT of linear combinations of characteristic elementary symmetric (CES) functions 
(which produce the coefficients of characteristic polynomials) have the maximum possible least period (except for a few genuine exceptions).
This bears a sharp contrast to previous techniques in literature employed to tackle existence of irreducible polynomials with prescribed coefficients. 

The rest of this work goes as follows. In Section \ref{section: dft} we recall some preliminary 
facts regarding the DFT, convolution, least period of cyclic functions, 
and give a sufficient condition in Lemma \ref{thm: connection} for an element of $\Fn$ to have degree $n$ over $\F$. 
Note Lemma \ref{thm: connection} also gives a necessary condition for primitive elements of $\Fn$ to be contained in the support of functions on $\Fn$, although we do not make use of this fact here.
In Section \ref{section: delta functions} we place the CES functions in the context of their DFT, which we refer to as delta functions; see Section \ref{section: delta functions}.
We then apply Lemma \ref{thm: connection} to give, 
in Lemma \ref{lem: gamma and delta functions}, sufficient conditions for the existence of irreducible polynomials with prescribed sum of coefficients. 
Finally in Section \ref{section: period of delta functions} we prove, in Lemma \ref{lem: period 1}, 
that the delta functions do indeed attain the maximum possible least period and hence prove the main results:  Theorem \ref{thm: main q = 2} and Theorem \ref{thm: main q > 2}.

\section{Preliminaries}\label{section: dft}
 We recall some preliminary concepts regarding the discrete Fourier transform (DFT) on finite fields, convolution, and the least period of functions on cyclic groups. 
 
Let  $N \in \mathbb{N}$ such that $N \mid q-1$, and let $\zeta_N$ be a primitive $N$-th root of unity in $\F^*$ 
 (the condition on $N$ guarantees the existence of $\zeta_N$). We shall use the common notation $\mathbb{Z}_N := \mathbb{Z}/ N \mathbb{Z}$.
 Now the {\em discrete Fourier transform} (DFT) based on $\zeta_N$, on the $\F$-vector space of functions $f : \mathbb{Z}_N \to \F$, 
 is defined by
 $$
 \mathcal{F}_{\zeta_N}[f](i) = \sum_{j \in \mathbb{Z}_N} f(j) \zeta_N^{ij}, \hspace{1em} i \in \mathbb{Z}_N.
 $$
 Note $\mathcal{F}_{\zeta_N}$ is a bijective linear operator with 
inverse given by $\mathcal{F}^{-1}_{\zeta_N} = N^{-1} \mathcal{F}_{\zeta_N^{-1}}$. 

For $f,g : \mathbb{Z}_N \to \F$, the convolution of $f,g$ is the function $f \otimes g : \mathbb{Z}_N \to \F$ given by
$$
(f\otimes g)(i) = \sum_{\substack{j + k = i \\ j,k \in \mathbb{Z}_N} } f(j)g(k). 
$$
Inductively, $f_1 \otimes f_2 \otimes \cdots \otimes f_k = f_1 \otimes (f_2 \otimes \cdots \otimes f_k)$ and so
$$
(f_1 \otimes \cdots \otimes f_k)(i) = \sum_{\substack{j_1 + \cdots + j_k = i \\ j_1, \ldots, j_k \in \mathbb{Z}_N}} f_1(j_1) \cdots f_k(j_k).
$$
For $m \in \mathbb{N}$, we let $f^{\otimes m}$ denote the $m$-th convolution power of $f$, that is, the convolution of $f$ with itself, $m$ times. 
The DFT and convolution are related by the fact that 
$$
\prod_{i=1}^k\mathcal{F}_{\zeta_N}[f_i] = \mathcal{F}_{\zeta_N}\left[ \bigotimes_{i=1}^k f_i\right]. 
$$
Since $f, \mathcal{F}_{\zeta_N}[f]$, have values in $\F$ by definition, it follows from the relation above that $f^{\otimes q} = f$.
Convolution is associative, commutative and distributive with identity $\delta_0 : \mathbb{Z}_N \to \{0, 1\} \subseteq \mathbb{F}_p$, the Kronecker delta function 
defined by $\delta_0(i) = 1$ if $i = 0$ and $\delta_0(i) = 0$ otherwise.
We set $f^{\otimes 0 } = \delta_0$.

Next we recall the concepts of a period and least period of a function $f:\mathbb{Z}_N \to \F$. 
For $r \in \mathbb{N}$, we say that $f$ is {\em $r$-periodic} if $f(i ) = f(i + \overline{r})$ for all $i \in \mathbb{Z}_N$. 
Clearly $f$ is $r$-periodic if and only if it is $\gcd(r, N)$-periodic.
The smallest such positive integer $r$ is called the {\em least period} of $f$. Note the least period $r$ satisfies $r \mid N$.
If the least period of $f$ is $N$, we say that $f$ has {\em maximum least period}.

There are various operations on cyclic functions which preserve the least period. For instance the {\em $k$-shift} function $f_k(i) := f(i + k)$ of $f$ has the same least period as $f$. 
The {\em reversal} function $f^*(i) := f(-(1 + i))$ of $f$ also has the same least period. Let $\sigma$ be a permutation of $\F$. The {\em permuted} function $f^{\sigma}(i) := \sigma(f(i))$ keeps the 
least period of $f$ as well. In particular if $f(\mathbb{Z}_N) \subseteq \{0,1\}$ and $\sigma$ sends $0$ to $1$ and $1$ to $0$, we call $f^{\sigma}$ the {\em complement} of $f$.

Let $\Phi_n(x) \in \mathbb{Z}[x]$ be the $n$-th cyclotomic polynomial. For a function $F$ on a set $A$, let $\supp(F) := \{a \in A \ : \ F(a) \neq 0\}$ be the support of $F$. The following result from \cite{Tuxanidy-Wang-H-M conjecture} seems to be quite useful. 

\begin{lem}\label{thm: connection}
 Let $q$ be a power of a prime, let $n \geq 2$, let $\zeta$ be a primitive element of $\Fn$, let $F : \Fn \to \Fn$, 
 let $f : \mathbb{Z}_{q^n-1} \to \Fn$ be defined by $f(k) = F(\zeta^k)$, and let $r$ be the least period of $\mathcal{F}_\zeta[f]$ 
 (which is the same as the least period of $\mathcal{F}_{\zeta}^{-1}[f]$). Then we have the following results.
 \\
 (i) If $r \nmid (q^n-1)/\Phi_n(q)$, 
 then $\supp(F)$ contains an element of degree $n$ over $\F$;\\
 (ii) If $\supp(F)$ contains an element of degree $n$ over $\F$, then $r \nmid (q^d-1)$ for every positive divisor $d$ of $n$ with $d < n$;\\
 (iii) If $\supp(F)$ contains a primitive element of $\Fn$, then $r = q^n-1$.
 \end{lem}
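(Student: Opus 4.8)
The plan is to derive all three parts from a single Fourier duality between periodicity of a transform and the support of the original function, combined with the factorization $q^n-1=\prod_{d\mid n}\Phi_d(q)$. Throughout write $N=q^n-1$, so that $\zeta$ is a primitive $N$-th root of unity in $\Fn$ and $f\colon\mathbb{Z}_N\to\Fn$. Two preliminary points are cheap. Since $n\geq 2$, the only element of $\Fn$ not of the form $\zeta^k$ is $0\in\F$, which has degree $1$ over $\F$ and is not primitive; hence in every part we may harmlessly replace $\supp(F)$ by $\{\zeta^k:f(k)\neq 0\}$. And the parenthetical assertion that $\mathcal{F}_\zeta[f]$ and $\mathcal{F}^{-1}_\zeta[f]$ share their least period holds because $\mathcal{F}^{-1}_\zeta[f]=N^{-1}\mathcal{F}_{\zeta^{-1}}[f]$ and $\mathcal{F}_{\zeta^{-1}}[f](i)=\mathcal{F}_\zeta[f](-i)$, so $\mathcal{F}^{-1}_\zeta[f]$ is obtained from $\mathcal{F}_\zeta[f]$ by a shift of the reversal followed by multiplication by the nonzero scalar $N^{-1}$, each of which preserves least period by the remarks in this section.

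The technical core is the claim that \emph{for every $f\colon\mathbb{Z}_N\to\Fn$ the least period of $g:=\mathcal{F}_\zeta[f]$ equals $N/m$, where $m:=\gcd\bigl(\{k:f(k)\neq 0\}\cup\{N\}\bigr)$} (well defined since $\gcd(k,N)=\gcd(k+N,N)$); in spirit this is the Fourier duality between periodicity of $g$ and the support of $f$ being confined to a subgroup of $\mathbb{Z}_N$. I would prove that, for each divisor $\rho$ of $N$, the function $g$ is $\rho$-periodic if and only if $\supp(f)\subseteq(N/\rho)\mathbb{Z}_N$. The ``if'' direction is one line: $g(i+\rho)=\sum_k f(k)\zeta^{ik}\zeta^{\rho k}$ and $\zeta^{\rho k}=1$ whenever $(N/\rho)\mid k$. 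For ``only if'' I would apply Fourier inversion, $f(k)=N^{-1}\sum_j g(j)\zeta^{-jk}$, split the sum over residues of $j$ modulo $\rho$ using $\rho$-periodicity of $g$, and observe that each resulting inner geometric sum $\sum_t\zeta^{-\rho t k}$ vanishes unless $(N/\rho)\mid k$. Granting the equivalence, $g$ is $\rho$-periodic iff $(N/\rho)\mid m$, and the smallest such $\rho$ is $N/m$.

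Next I would record the arithmetic needed. For a divisor $d\mid n$ the unique subgroup of $\Fn^*$ of order $q^d-1$ is $\mathbb{F}_{q^d}^*=\{\zeta^k:(N/(q^d-1))\mid k\}$; equivalently, $\zeta^k$ has degree $n$ over $\F$ precisely when $(N/(q^d-1))\nmid k$ for every proper divisor $d$ of $n$. Moreover $N/(q^d-1)=\prod_{e\mid n,\ e\nmid d}\Phi_e(q)$, and since $d<n$ forces $n\nmid d$ the factor $\Phi_n(q)$ appears, so $\Phi_n(q)\mid N/(q^d-1)$ for every proper divisor $d\mid n$.

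Finally I would assemble the three parts, each in a couple of lines. For (i), argue contrapositively: if no $\zeta^k\in\supp(F)$ has degree $n$, then every such $k$ is a multiple of $N/(q^{d}-1)$ for some proper divisor $d\mid n$, hence of $\Phi_n(q)$; as $\Phi_n(q)\mid N$ as well, $\Phi_n(q)\mid m$, and therefore $r=N/m$ divides $N/\Phi_n(q)=(q^n-1)/\Phi_n(q)$. For (ii), if $\alpha=\zeta^{k_0}\in\supp(F)$ has degree $n$ then $(N/(q^d-1))\nmid k_0$ for every proper divisor $d\mid n$, while $m\mid k_0$; if $r=N/m$ divided $q^d-1$ then $(q^d-1)/r\in\mathbb{Z}$ would give $N/(q^d-1)\mid N/r=m\mid k_0$, a contradiction. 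For (iii), a primitive $\alpha=\zeta^{k_0}\in\supp(F)$ has $\gcd(k_0,N)=1$, so $m\mid\gcd(k_0,N)=1$ and hence $r=N/m=N=q^n-1$. I expect the only genuine obstacle to be the ``only if'' half of the period--support duality in the second paragraph: making the geometric-sum cancellation over $\mathbb{Z}_N$ precise and correctly handling the case $0\in\supp(f)$, where $\gcd(0,N)=N$; everything downstream is bookkeeping with divisors of $q^n-1$.
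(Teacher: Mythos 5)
Your proof is correct: the period--support duality (least period of $\mathcal{F}_\zeta[f]$ equals $(q^n-1)/\gcd(\supp(f)\cup\{q^n-1\})$, proved via the geometric-sum cancellation in the inversion formula) combined with $\mathbb{F}_{q^d}^*=\langle\zeta^{(q^n-1)/(q^d-1)}\rangle$ and the factorization $q^n-1=\prod_{d\mid n}\Phi_d(q)$ yields all three parts, and your handling of the element $0$ and of the reversal/shift relation between $\mathcal{F}_\zeta$ and $\mathcal{F}_\zeta^{-1}$ is sound. The present paper imports this lemma from the cited companion work without reproducing its proof, but your argument is essentially the same mechanism that underlies the quoted result, so there is nothing substantive to contrast.
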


In particular (i) implies the existence of an irreducible factor of degree $n$ for any polynomial 
$h(x) \in \F[x]$ satisfying a constraint on the least period as follows. 
Here $\Fn^\times$ and $L^\times$ denote the set of all invertible elements in $\Fn$ and $L$ respectively. 

  \begin{lem}\label{lem: factor of deg n}
  Let $q$ be a power of a prime, let $n \geq 2$, let $h(x) \in \F[x]$, and let $L$ be any subfield of $\Fn$ containing the image $h(\Fn^\times)$.
  Define the polynomial
  $$
  S(x) = \left(1 - h(x)^{\#L^\times} \right) \bmod\left( x^{q^n-1} - 1\right) \in \F[x].
  $$
  Write $S(x) = \sum_{i=0}^{q^n-2} s_i x^i$ for some coefficients $s_i \in \F$. 
  If the cyclic sequence $(s_i)_{i=0}^{q^n-2}$ has least period $r$ satisfying $r \nmid (q^n-1)/\Phi_n(q)$, then 
  $h(x)$ has an irreducible factor of degree $n$ over $\F$.
  \end{lem}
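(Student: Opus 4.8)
The plan is to deduce this directly from Lemma~\ref{thm: connection}(i), applied to a function on $\Fn$ read off from $h$.

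First I would set $N = q^n-1$, fix a primitive element $\zeta$ of $\Fn$, and introduce $F : \Fn \to \Fn$ by $F(\alpha) = 1 - h(\alpha)^{\#L^\times}$. The point of the exponent $\#L^\times$ is that, since $h(\Fn^\times)\subseteq L$, for every $\alpha\in\Fn^\times$ one has $h(\alpha)\in L$, so $h(\alpha)^{\#L^\times}\in\{0,1\}$ with value $1$ exactly when $h(\alpha)\neq 0$ (Lagrange's theorem in the cyclic group $L^\times$, together with $0^{\#L^\times}=0$). Hence $\supp(F)\cap\Fn^\times$ is precisely the set of nonzero roots of $h$ in $\Fn$ (the value $F(0)$ plays no role in what follows). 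Consequently, once we know $\supp(F)$ contains an element $\alpha$ of degree $n$ over $\F$, we are done: such an $\alpha$ is nonzero (as $n\geq 2$), hence a root of $h$, and its minimal polynomial over $\F$ is then an irreducible factor of $h$ of degree $n$. So the whole matter reduces to verifying the hypothesis of Lemma~\ref{thm: connection}(i) for $F$.

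Next I would identify the discrete Fourier data. Put $f:\Zqn\to\Fn$, $f(k)=F(\zeta^k)$, and let $s:\Zqn\to\F\subseteq\Fn$ be the cyclic sequence $s(i)=s_i$. Since each $\zeta^k$ is a root of $x^N-1$, reduction modulo $x^N-1$ does not affect the value at $\zeta^k$; therefore $f(k)=1-h(\zeta^k)^{\#L^\times}=S(\zeta^k)=\sum_{i=0}^{N-1}s_i\zeta^{ik}=\mathcal{F}_\zeta[s](k)$, i.e.\ $f=\mathcal{F}_\zeta[s]$, equivalently $\mathcal{F}_\zeta^{-1}[f]=s$. By Lemma~\ref{thm: connection} the least period of $\mathcal{F}_\zeta[f]$ equals that of $\mathcal{F}_\zeta^{-1}[f]=s$, which is the least period of the cyclic sequence $(s_i)_{i=0}^{q^n-2}$, namely $r$ by hypothesis. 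Since $r\nmid(q^n-1)/\Phi_n(q)$, Lemma~\ref{thm: connection}(i) produces the required element of degree $n$ in $\supp(F)$, and the previous paragraph then yields the irreducible factor of degree $n$ of $h$ over $\F$.

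I do not expect a genuine obstacle here: the statement is essentially a translation of Lemma~\ref{thm: connection}(i) into the language of polynomials. The one step demanding a little care is the identity $f=\mathcal{F}_\zeta[s]$ — that is, checking that reducing $1-h(x)^{\#L^\times}$ modulo $x^{q^n-1}-1$ is exactly what makes the coefficient sequence $(s_i)$ of $S$ the inverse DFT of $k\mapsto F(\zeta^k)$, so that the least-period hypothesis on $(s_i)$ becomes verbatim the least-period hypothesis of Lemma~\ref{thm: connection}(i).
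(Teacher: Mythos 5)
Your proof is correct and follows exactly the route the paper intends: the lemma is stated there as an immediate consequence of Lemma~\ref{thm: connection}(i), and your argument—taking $F = 1 - h^{\#L^\times}$ so that $\supp(F)\cap\Fn^\times$ is the set of nonzero roots of $h$, and observing that the coefficient sequence of $S$ is $\mathcal{F}_\zeta^{-1}[f]$—is precisely the verification the paper leaves implicit. No gaps.
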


\section{Characteristic elementary symmetric and delta functions}\label{section: delta functions}

In this section we apply Lemma \ref{thm: connection} for the purposes of studying digit sums of irreducible polynomials, and thus give in Lemma \ref{lem: gamma and delta functions}
sufficient conditions for an irreducible polynomial to have a prescribed  sum of coefficients.
For this, we first place the characteristic elementary symmetric functions in the context of their DFT, 
which we shall refer to here simply as delta functions. These delta functions are indicators, with values in a finite field, 
for sets of values in $\mathbb{Z}_{q^n-1}$ whose 
canonical integer representatives have certain Hamming weights in their $q$-ary representation and $q$-digits all belonging to the set $\{0,1\}$.
Essentially, characteristic elementary symmetric functions are exponential characteristic generating functions of the sets that the delta functions indicate.

For $\xi \in \Fn$, the characteristic polynomial $h_\xi(x) \in \F[x]$ of degree $n$ over $\F$ with root $\xi$ is given by 
$$
h_\xi(x) = \prod_{k=0}^{n-1}\left( x - \xi^{q^k} \right) = \sum_{w=0}^{n}(-1)^w \sigma_w(\xi) x^{n-w},
$$
where for $0 \leq w \leq n$, $\sigma_w : \Fn \to \F$ is the {\em characteristic elementary symmetric} function given by $\sigma_0(\xi) = 1$ and
$$
 \sigma_w(\xi) = \sum_{0 \leq i_1 <  \cdots < i_w \leq n-1} \xi^{q^{i_1} + \cdots + q^{i_w}}, 
 $$
 for $ 1 \leq w \leq n$.
 In particular $\sigma_1 = \Tn$ is the (linear) trace function and $\sigma_n = N_{\Fn/\F}$ is the (multiplicative) norm function. 
 Whenever $q = 2$ and $\xi \neq 0$, then $\sigma_0(\xi) = \sigma_n(\xi) = 1$ always. 
 If $\xi \neq 0$, then (in general) $h_{\xi^{-1}}(x) = (-1)^n \sigma_n(\xi^{-1}) x^n h_\xi(1/x) = h_\xi^*(x)$, where $h_\xi^*(x)$ is the (monic) {\em reciprocal} of $h_\xi(x)$. Thus
 $\sigma_w(\xi) = \sigma_n(\xi) \sigma_{n-w}(\xi^{-1})$. Clearly $h_\xi(x)$ is irreducible if and only if so is $h_{\xi}^*(x)$. 
 This occurs if and only if $\deg_{\F}(\xi) = n$.
 
Next we introduce the characteristic delta functions and the sets they indicate. But
first let us clarify some ambiguity in our notation: For $a,b \in \mathbb{Z}$, we denote by $a \bmod b$ the remainder of division of $a$ by $b$.
That is, $a \bmod b$ is the smallest integer $c$ in $\{0, 1, \ldots, b-1\}$ that is congruent to $a$ modulo $b$, and write $c = a\bmod b$. 
Similarly if $\bar{a} = a + b\mathbb{Z}$ is an element of $\mathbb{Z}_b$, we use the notation $\bar{a} \bmod b := a \bmod b$ to express the canonical representative of $\bar{a}$ in $\mathbb{Z}$.
But we keep the usual notation $k \equiv a \pmod{b}$ to state that $b \mid (k-a)$.
 
 For $w \in [0, n] := \{0, 1, \ldots, n\}$, define the sets $\Omega(w) \subseteq \mathbb{Z}_{q^n-1}$ by $\Omega(0) = \{0\}$ and 
 $$
 \Omega(w) = \left\{ k \in \mathbb{Z}_{q^n-1} \ : \ k \bmod (q^n-1) = q^{i_1} + \cdots + q^{i_w}, \ 0 \leq i_1 <  \cdots < i_w \leq n-1              \right\}
$$
for $1 \leq w \leq n$. That is $\Omega(w)$ consists of all the elements $k \in \mathbb{Z}_{q^n-1}$ whose canonical representatives in $\{0, 1, \ldots, q^n-2\} \subset \mathbb{Z}$ 
have Hamming weight $w$ in their 
$q$-ary representation $(a_{n-1}, \ldots, a_0)_q$, with each $a_i \in \{0,1\}$.
Note this last condition that each $a_i \in \{0,1\}$ is automatically redundant when $q = 2$, since in general each $a_i \in [0, q-1]$ in the $q$-ary representation $t = (a_{m}, \ldots, a_0)_q$ of
a non-negative integer
$
t = \sum_{i=0}^{m}a_i q^i.
$

When $q = 2$, note $\Omega(n) = \emptyset$ since there is no integer in $\{0, 1, \ldots, 2^n-2\}$ with Hamming weight $n$ in its binary representation. 
Observe also that $|\Omega(w)| = {n \choose w}$ for each $0 \leq w \leq n$, unless $(q,w) = (2,n)$. 
Moreover $\Omega(v) \cap \Omega(w) = \emptyset$ whenever $v \neq w$, by the uniqueness of base representation of integers. 
We extend the domain of $\Omega$ to sets by setting, for $W \subseteq [0, n]$,
$$
\Omega(W) = \bigsqcup_{w \in W} \Omega(w).
$$
Define also the {\em reflection} of $W$ to be the set $n - W := \{n - w \ : \ w \in W\} \subseteq [0, n]$. 
Clearly $|\Omega(W)| = |\Omega(n - W)| = \sum_{w \in W}{n \choose w}$ if $(q, W) \neq (2,\{0\}), (2, \{n\})$. 

For $W \subseteq [0, n]$, define the characteristic (finite field valued) function
$\delta_W : \mathbb{Z}_{q^n-1} \to \mathbb{F}_p$ of the set $\Omega(W)$ by
$$
\delta_W(k) = \begin{cases}
              1 & \mbox{ if } k \in \Omega(W);\\
              0 & \mbox{ otherwise.}
              \end{cases}
              $$
Note $\delta_W = \sum_{w \in W} \delta_w$.
If $W = \{w\}$ contains only a single element, we simply write $\delta_w$ instead of $\delta_{\{w\}}$. 
Observe that our $\delta_0$ is the Kronecker delta function on $\mathbb{Z}_{q^n-1}$ 
with values in $\{0,1\} \subseteq \mathbb{F}_p$.

Let $\zeta$ be a primitive element of $\Fn$ and let $w \in [0, n]$. Then   $\sigma_0(\zeta^k) = 1$ for each $k$ and so $\sigma_0(\zeta^k) = \mathcal{F}_\zeta[\delta_0](k)$. 
Now let $ 1 \leq w \leq n$.  With the extra assumption that $(q,w) \neq (2,n)$, we have
\begin{align*}
 \sigma_w(\zeta^k) &= \sum_{0 \leq i_1 <  \cdots < i_w \leq n-1} \zeta^{k\left(q^{i_1} + \cdots  + q^{i_w}\right)} = \sum_{j \in \mathbb{Z}_{q^n-1}} \delta_w(j) \zeta^{kj}  = \mathcal{F}_\zeta[\delta_w](k).
\end{align*}
 
This derives the following useful result. 

\begin{lem}\label{lem: sigma delta}
 Let $\zeta$ be a primitive element of $\Fn$ and let $w \in [0, n]$. If $q = 2$, further assume that $w \neq n$. Then   
 $$
 \sigma_w(\zeta^k) = \mathcal{F}_\zeta[\delta_w](k), \hspace{2em} k \in \mathbb{Z}_{q^n-1}.
 $$
\end{lem}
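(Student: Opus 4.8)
The plan is to verify the identity by unwinding the definitions of $\sigma_w$, $\delta_w$, and the DFT based on $\zeta$, the only genuine content being the identification of the two index sets over which one sums. I would first dispose of the case $w = 0$ and then treat $1 \le w \le n$ with $(q,w) \ne (2,n)$.

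For $w = 0$: by convention $\sigma_0(\zeta^k) = 1$ for every $k$, while $\mathcal{F}_\zeta[\delta_0](k) = \sum_{j \in \mathbb{Z}_{q^n-1}} \delta_0(j)\zeta^{kj} = \zeta^{0} = 1$ because $\delta_0$ is the Kronecker delta supported at $0$. Hence equality holds.

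For $1 \le w \le n$ with $(q,w) \ne (2,n)$, the key step is to show that the map sending a tuple $(i_1, \ldots, i_w)$ with $0 \le i_1 < \cdots < i_w \le n-1$ to the residue class of $q^{i_1} + \cdots + q^{i_w}$ modulo $q^n - 1$ is a bijection onto $\Omega(w)$. Surjectivity is immediate from the definition of $\Omega(w)$. For injectivity, I would observe that under the stated hypotheses the integer $q^{i_1} + \cdots + q^{i_w}$ lies in $\{1, \ldots, q^n - 2\}$, so it is its own canonical representative modulo $q^n-1$: for $q \ge 3$ it is bounded above by $q^{n-1} + \cdots + q^{n-w} \le (q^n-1)/(q-1) < q^n-1$, and for $q = 2$ with $w \le n-1$ it is at most $2^{n-1} + \cdots + 2 = 2^n - 2$. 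Distinct admissible tuples then give distinct representatives by uniqueness of the $q$-ary expansion (whose digits are in $\{0,1\}$ here). Granting this bijection, one computes
$$\sigma_w(\zeta^k) = \sum_{0 \le i_1 < \cdots < i_w \le n-1} \zeta^{k(q^{i_1} + \cdots + q^{i_w})} = \sum_{j \in \Omega(w)} \zeta^{kj} = \sum_{j \in \mathbb{Z}_{q^n-1}} \delta_w(j)\zeta^{kj} = \mathcal{F}_\zeta[\delta_w](k),$$
the third equality because $\delta_w$ is by definition the indicator of $\Omega(w)$ and the last by the definition of the DFT.

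The only subtlety — and the reason the case $(q,w) = (2,n)$ is excluded — is that for $q = 2$ and $w = n$ the single admissible tuple gives $2^{n-1} + \cdots + 2^0 = 2^n - 1 \equiv 0 \pmod{2^n - 1}$, so the exponent map is no longer injective and indeed $\Omega(n) = \emptyset$; in that case $\sigma_n(\zeta^k) = 1 \ne 0 = \mathcal{F}_\zeta[\delta_n](k)$ for all $k$. No such difficulty arises otherwise, so I do not anticipate any serious obstacle: the proof is essentially a change of summation index justified by the uniqueness of base-$q$ representation, with a minor case check to confirm the exponents stay below $q^n - 1$.
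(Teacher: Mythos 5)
Your proof is correct and follows essentially the same route as the paper, which establishes the identity by the same change of summation index from ordered tuples $(i_1,\ldots,i_w)$ to elements of $\Omega(w)$. You simply make explicit the bijectivity check (that the exponents $q^{i_1}+\cdots+q^{i_w}$ are their own canonical representatives modulo $q^n-1$ and are pairwise distinct) that the paper leaves implicit, and your explanation of the excluded case $(q,w)=(2,n)$ matches the paper's remark that $\Omega(n)=\emptyset$ when $q=2$.
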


The following lemma gives sufficient conditions for the existence of irreducible polynomials of degree $n$ satisfying the desired constraints on their coefficients.

\begin{lem}\label{lem: gamma and delta functions}
 Let $q$ be a power of a prime, let $c \in \F$, let $n \geq 2$ and let $W \subseteq [0,n]$. 
 If $q = 2$, further assume that $n \not\in W$. We have the following two results. 
 \\\\
 (i) If the least period of the function $\gamma_{W,c} : \mathbb{Z}_{q^n-1} \to \F$, given by 
 $$
 \gamma_{W,c} = \sum_{w \in W}(-1)^w \delta_w - c \delta_0,
 $$
 is not a divisor of $(q^n-1)/\Phi_n(q)$, then there exists an irreducible polynomial $P(x)$ of degree $n$ over $\F$ such that $S_{n-W}(P) \neq c$.
  \\\\
 (ii) If the least period of the function $\Delta_{W, c} : \mathbb{Z}_{q^n-1} \to \F$, given by
 $$
 \Delta_{W,c} =  \delta_0 - \left( \sum_{w \in W} (-1)^w \delta_w - c \delta_0\right)^{\otimes(q-1)},
 $$
 is not a divisor of $(q^n-1)/\Phi_n(q)$, then there exists an irreducible polynomial $P(x)$ of degree $n$ over $\F$ such that $S_{n-W}(P) = c$.
 \end{lem}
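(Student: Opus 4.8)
The plan is to realise, for a primitive element $\zeta$ of $\Fn$, both $\gamma_{W,c}$ and $\Delta_{W,c}$ as $\mathcal{F}_\zeta^{-1}[f]$ for an $f$ of the form $f(k)=F(\zeta^k)$ with $F$ the indicator of the desired set of roots, and then to feed this into Lemma~\ref{thm: connection}(i). First recall that for $\xi\in\Fn$ one has $h_\xi(x)=\sum_{w=0}^{n}(-1)^w\sigma_w(\xi)x^{n-w}$, so $[x^m]h_\xi(x)=(-1)^{n-m}\sigma_{n-m}(\xi)$ for $m\in[0,n]$; reindexing the defining sum of $S_{n-W}$ via $v=n-m$ gives
$$
S_{n-W}(h_\xi)=\sum_{m\in n-W}(-1)^{n-m}\sigma_{n-m}(\xi)=\sum_{v\in W}(-1)^v\sigma_v(\xi).
$$
Since $h_\xi$ is monic of degree $n$ and is irreducible precisely when $\deg_\F(\xi)=n$, it suffices in each part to produce a $\xi\in\Fn$ of degree $n$ over $\F$ in the support of the appropriate $F$, and then take $P(x):=h_\xi(x)$.

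For part (i) I would set $F:\Fn\to\Fn$, $F(\xi)=\sum_{v\in W}(-1)^v\sigma_v(\xi)-c$, so that $\xi\in\supp(F)$ if and only if $S_{n-W}(h_\xi)\neq c$. Writing $f(k)=F(\zeta^k)$ and invoking Lemma~\ref{lem: sigma delta} --- this is the one place the hypothesis ``$n\notin W$ when $q=2$'' is used, to guarantee $v\neq n$ for every $v\in W$ --- together with $\mathcal{F}_\zeta[\delta_0](k)=1$ for all $k$, we obtain
$$
f(k)=\sum_{v\in W}(-1)^v\mathcal{F}_\zeta[\delta_v](k)-c\,\mathcal{F}_\zeta[\delta_0](k)=\mathcal{F}_\zeta[\gamma_{W,c}](k),
$$
so $\mathcal{F}_\zeta^{-1}[f]=\gamma_{W,c}$ and the least period $r$ of $\mathcal{F}_\zeta[f]$ coincides with that of $\gamma_{W,c}$. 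If $r\nmid(q^n-1)/\Phi_n(q)$, Lemma~\ref{thm: connection}(i) produces $\xi$ of degree $n$ over $\F$ with $F(\xi)\neq0$, i.e.\ $S_{n-W}(h_\xi)\neq c$.

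For part (ii) I would instead put $g(\xi)=\sum_{v\in W}(-1)^v\sigma_v(\xi)-c$, which lies in $\F$ since each $\sigma_v$ is $\F$-valued, and set $F(\xi)=1-g(\xi)^{q-1}$. Because $a^{q-1}=1$ for nonzero $a\in\F$ and $0^{q-1}=0$, this $F$ is the $\{0,1\}$-valued indicator of $\{\xi:g(\xi)=0\}=\{\xi:S_{n-W}(h_\xi)=c\}$. With $f(k)=F(\zeta^k)$ and the product--convolution identity $\prod_i\mathcal{F}_\zeta[f_i]=\mathcal{F}_\zeta[\bigotimes_i f_i]$ one gets $g(\zeta^k)^{q-1}=\mathcal{F}_\zeta[\gamma_{W,c}](k)^{q-1}=\mathcal{F}_\zeta\big[\gamma_{W,c}^{\otimes(q-1)}\big](k)$, whence
$$
f(k)=\mathcal{F}_\zeta[\delta_0](k)-\mathcal{F}_\zeta\big[\gamma_{W,c}^{\otimes(q-1)}\big](k)=\mathcal{F}_\zeta[\Delta_{W,c}](k),
$$
so $\mathcal{F}_\zeta^{-1}[f]=\Delta_{W,c}$. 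Again, if the least period of $\Delta_{W,c}$ does not divide $(q^n-1)/\Phi_n(q)$, Lemma~\ref{thm: connection}(i) yields $\xi$ of degree $n$ with $g(\xi)=0$, i.e.\ $S_{n-W}(h_\xi)=c$.

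This argument is essentially bookkeeping. The points needing care are tracking the reflection $W\mapsto n-W$ and the signs $(-1)^v$ through the identification of $S_{n-W}(h_\xi)$ with a linear combination of characteristic elementary symmetric functions, and observing that $g(\xi)$ genuinely lies in $\F$ (not merely in $\Fn$), so that raising to the $(q-1)$st power in part (ii) really collapses to a two-valued indicator. The substantive obstacle is not here at all: it is the verification that the least periods of $\gamma_{W,c}$ and $\Delta_{W,c}$ actually fail to divide $(q^n-1)/\Phi_n(q)$ outside a short list of exceptions, which is the content of Lemma~\ref{lem: period 1}.
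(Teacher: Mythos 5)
Your proposal is correct and follows essentially the same route as the paper: realise $\gamma_{W,c}$ and $\Delta_{W,c}$ as inverse DFTs of the functions $k\mapsto\sum_{w\in W}(-1)^w\sigma_w(\zeta^k)-c$ and its Vinogradov-style indicator $1-(\cdot)^{q-1}$ via Lemma~\ref{lem: sigma delta} and the product--convolution identity, then invoke Lemma~\ref{thm: connection}(i). Your explicit reindexing showing $S_{n-W}(h_\xi)=\sum_{v\in W}(-1)^v\sigma_v(\xi)$, and your remarks on where the hypothesis $n\notin W$ for $q=2$ and the $\F$-valuedness of $g$ are used, match the paper's reasoning.
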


\begin{proof}

First fix a primitive element $\zeta$ of $\Fn$. 

(i) Define the function $\widehat{\gamma}_{W, c} : \mathbb{Z}_{q^n-1} \to \F$ by 
$$
\widehat{\gamma}_{W, c}(k) = \sum_{w \in W}(-1)^w\sigma_w(\zeta^k) - c.
$$
By Lemma \ref{lem: sigma delta}, the linearity of the DFT, and the fact that $c = \mathcal{F}_\zeta[c \delta_0]$, we have
\begin{align*}
\widehat{\gamma}_{W, c} &= \sum_{w \in W}(-1)^w\mathcal{F}_\zeta[\delta_w] - \mathcal{F}_\zeta[c \delta_0]
 = \mathcal{F}_\zeta \left[ \sum_{w \in W} (-1)^w\delta_w - c \delta_0\right]
 = \mathcal{F}_\zeta \left[ \sum_{w \in W} (-1)^w\delta_w - c \delta_0\right]\\
 &=  \mathcal{F}_\zeta \left[ \gamma_{W,c}\right].
\end{align*}
Thus $\gamma_{W,c} = \mathcal{F}_\zeta^{-1}[\widehat{\gamma}_{W, c}]$. 
Let $F : \mathbb{F}_{q^n} \to \F$ be the associate function of $\widehat{\gamma}_{W, c}$ defined by $F(\zeta^k) = \widehat{\gamma}_{W, c}(k)$ (and say $F(0) = 0$).
Since $\gamma_{W,c} = \mathcal{F}_\zeta^{-1}[\widehat{\gamma}_{W, c}]$ has the desired least period by assumption, Lemma \ref{thm: connection} implies there exists an element $\xi \in \Fn$
of degree $n$ over $\F$ such that $F(\xi) \neq 0$, i.e., $\sum_{w \in W}\sigma(\xi) \neq c$.

(ii) Consider the function $\widehat{\Delta}_{W, c} : \mathbb{Z}_{q^n-1} \to \mathbb{F}_q$ defined by
$$
\widehat{\Delta}_{W, c}(k) = 1 - \left( \sum_{w \in W}\left[ x^{n-w} \right] h_{\zeta^k}(x) - c \right)^{q-1},
$$
where $h_{\zeta^k}(x)$ is the characteristic polynomial of degree $n$ over $\F$ with root $\zeta^k$. Note that
$$
\widehat{\Delta}_{W, c}(k) = \begin{cases}
                             1 &  \mbox{ if } \sum_{w \in W}\left[ x^{n-w} \right] h_{\zeta^k}(x) = c;\\
                             0 & \mbox{ otherwise}.
                            \end{cases}
$$
By Lemma \ref{lem: sigma delta} and the linearity of the DFT, we have
\begin{align*}
 \sum_{w \in W}\left[x^{n-w} \right] h_{\zeta^k}(x) &= \sum_{w \in W}(-1)^w \sigma_w(\zeta^k) = \sum_{w \in W}(-1)^w \mathcal{F}_{\zeta}[\delta_w](k)\\
 &= \mathcal{F}_\zeta\left[ \sum_{w \in W} (-1)^w \delta_w \right](k).
\end{align*}
Clearly $c = \mathcal{F}_\zeta[c \delta_0]$ and particularly $1 = \mathcal{F}_\zeta[\delta_0]$. It follows that
\begin{align*}
\widehat{\Delta}_{W, c} &= \mathcal{F}_\zeta[\delta_0] - \left( \mathcal{F}_\zeta\left[ \sum_{w \in W} (-1)^w \delta_w -c \delta_0 \right]  \right)^{q-1}\\
&= \mathcal{F}_\zeta[\delta_0] -\mathcal{F}_\zeta\left[ \left(\sum_{w \in W} (-1)^w \delta_w -c \delta_0 \right)^{\otimes(q-1)}   \right]\\
&= \mathcal{F}_\zeta\left[  \delta_0 - \left(\sum_{w \in W} (-1)^w \delta_w -c \delta_0 \right)^{\otimes(q-1)}  \right]\\
&= \mathcal{F}_\zeta\left[ \Delta_{W,c} \right].
\end{align*}
Hence $\Delta_{W,c} = \mathcal{F}_\zeta^{-1}[\widehat{\Delta}_{W, c}]$. Let $F : \Fn \to \mathbb{F}_q$ be the associate function of $\widehat{\Delta}_{W,c}$ defined by 
$F(\zeta^k) = \widehat{\Delta}_{W,c}(k)$ (and say $F(0) = 0$). Since $\Delta_{W,c} = \mathcal{F}_\zeta^{-1}[\widehat{\Delta}_{W, c}]$ has the desired least period by assumption, Lemma \ref{thm: connection}
implies there exists an element $\xi$ of degree $n$ over $\F$ such that 
$$
0 \neq F(\xi) = 1 - \left( \sum_{w \in W}\left[ x^{n-w} \right] h_{\xi}(x) - c \right)^{q-1}.
$$
Thus $F(\xi) = 1$, $h_\xi(x)$ is irreducible of degree $n$ over $\F$, and $\sum_{w \in W}\left[ x^{n-w} \right] h_{\xi}(x) = c$. This concludes the proof of (ii).
 \end{proof}

 \section{Least period of delta functions and proof of main results}\label{section: period of delta functions}
 
 Having obtained the sufficient condition in Lemma \ref{lem: gamma and delta functions}, we proceed to prove the result in Lemma \ref{lem: period 1}
 that the sums of delta functions have, except for a few clear exceptions, maximum least period. 
 The proof of this is of a rather elementary although constructive type nature.
 We then conclude the section with proofs of the main results. 
 
First we introduce some notations. 
For a set $S \subseteq \mathbb{N}_0$, let $S^+  = S\setminus\{0\}$.
For a non-negative integer $t = \sum_{i \in A} a_i q^i$ with $A \subset \mathbb{N}_0$ finite and each $a_i \in [1, q-1]$, 
we call $A$ the {\em $q$-support} of $t$ and write $\supp_q(t) = A$. 
We let $w_q(t) = |A|$ be the Hamming weight of $t$ in its $q$-ary representation. 
Recall that $n - S := \{n - s \mid s \in S\}$. Recall also that we at times identify elements in $\Zqn$ with integers in the natural way and vice versa (with addition taken modulo $q^n-1$).
This endows $\Zqn$ with the natural ordering in $\mathbb{Z}$.  
For the sake of brevity we use the notation $Q_n = (q^n-1)/(q-1)$. The following lemma will be useful in the proof of Lemma \ref{lem: period 1}. 

\begin{lem}\label{lem: period 0}
 Let $S \subseteq \{0, 1, \ldots, n\}, S \neq \emptyset$. If $q = 2$, further assume $0, n \not\in S$. 
 Then for all $i \in \Zqn$, we have $\delta_{n - S}(i) = \delta_S(Q_n - i)$. 
 Hence $\delta_{n - S}$ is a shift of the reversal of $\delta_S$. 
\end{lem}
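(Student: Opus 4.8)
The claim has two parts: the pointwise identity $\delta_{n-S}(i) = \delta_S(Q_n - i)$ for all $i \in \Zqn$, and the structural consequence that $\delta_{n-S}$ is a shift of the reversal of $\delta_S$. I would obtain the second part for free from the first: recall that the reversal of a function $g$ is $g^*(i) = g(-(1+i))$, and a $k$-shift is $i \mapsto g(i+k)$. Since $Q_n - i = -(1+i) + (Q_n + 1)$, the function $i \mapsto \delta_S(Q_n - i)$ is exactly the $(Q_n+1)$-shift of the reversal $\delta_S^*$; so once the identity is proved, the structural statement is immediate (and in particular, by the period-preservation facts recalled in Section~\ref{section: dft}, $\delta_{n-S}$ and $\delta_S$ have the same least period).

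For the pointwise identity, the plan is to reduce to the single-element case $S = \{w\}$ and then to a purely combinatorial fact about complementation of $\{0,1\}$-valued $q$-ary digit strings of length $n$. Since $\delta_S = \sum_{w \in S}\delta_w$ and $n - S = \bigsqcup_{w\in S}\{n-w\}$ with the $\Omega(w)$ pairwise disjoint, it suffices to show $\delta_{n-w}(i) = \delta_w(Q_n - i)$ for each individual $w$, i.e. that $i \in \Omega(n-w) \iff Q_n - i \in \Omega(w)$. The key observation is that $Q_n = 1 + q + \cdots + q^{n-1} = (\underbrace{1,1,\ldots,1}_{n})_q$, the all-ones string of length $n$. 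If $i \in \Zqn$ has canonical representative whose $q$-ary expansion is a length-$n$ string with digits in $\{0,1\}$ and Hamming weight $n-w$ — i.e. $i \in \Omega(n-w)$, taking $i \ne 0$ and handling $i = 0$, $i = Q_n$ as boundary cases — then $Q_n - i$ is computed digitwise with no borrows (each digit of $Q_n$ is $1 \geq$ the corresponding digit of $i$), giving the bitwise complement string, which has digits in $\{0,1\}$ and Hamming weight $w$; this lands in $\{0,1,\ldots,q^n-2\}$ precisely because $i \neq 0$ forces $Q_n - i \le Q_n - 1 < q^n - 1$, so it is a genuine canonical representative and $Q_n - i \in \Omega(w)$. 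The converse is symmetric. The hypotheses $0, n \notin S$ when $q=2$ are exactly what is needed to avoid the degenerate cases: when $q=2$, $\Omega(n) = \emptyset$ and $\Omega(0) = \{0\}$ with $Q_n - 0 = Q_n = 2^n - 1 \equiv 0$, so the complementation bijection between $\Omega(0)$ and $\Omega(n)$ breaks down; excluding $0$ and $n$ from $S$ sidesteps this. For $q > 2$ one should double-check that $\delta_0$ behaves correctly, i.e. $0 \in \Omega(n-S) \iff Q_n \in \Omega(S)$, using $Q_n = (1,\ldots,1)_q \in \Omega(n)$ when $q>2$.

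The main obstacle, such as it is, is purely bookkeeping: making the digitwise-subtraction argument rigorous by carefully translating between residues in $\Zqn$, their canonical integer representatives in $\{0,\ldots,q^n-2\}$, and $q$-ary digit strings of length exactly $n$, and confirming that the "no borrow" subtraction $Q_n - i$ never produces $q^n - 1$ (which would collapse to $0$ in $\Zqn$ and would not be a valid canonical representative) — this is precisely where $i \neq 0$ is used. There is no deep difficulty here; the content is the identification $Q_n \leftrightarrow (1,1,\ldots,1)_q$ together with the fact that subtracting a $\{0,1\}$-string from the all-ones string is bitwise complementation, which reverses Hamming weight $w \leftrightarrow n - w$.
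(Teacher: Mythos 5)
Your proposal is correct and follows essentially the same route as the paper: identify $Q_n$ with the all-ones $q$-ary string of length $n$, so that $Q_n - i = \sum_{k\in A}q^k$ with $A \subseteq [0,n-1]$ if and only if $i = \sum_{k\in[0,n-1]\setminus A}q^k$, i.e.\ subtraction from $Q_n$ is digit complementation exchanging Hamming weights $w$ and $n-w$. Your extra bookkeeping (reduction to singletons, the boundary cases $i=0$, $i=Q_n$, and the explicit identification of $i\mapsto\delta_S(Q_n-i)$ as the $(Q_n+1)$-shift of the reversal) is sound and merely spells out what the paper leaves implicit.
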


\begin{proof}
If we identify $i$ with its canonical representative (which we can) we have $\delta_S(Q_n - i ) = 1$ if and only if 
$Q_n - i = \sum_{k \in A} q^k$ for some $A \subseteq [0, n-1]$ with $|A| \in S$, i.e., 
 $i = \sum_{k \in [0, n-1]\setminus A} q^k$. 
 This occurs if and only if $\delta_{n - S}(i) = 1$. 
\end{proof}

\begin{lem}\label{lem: period 1}
 Let $n \in \mathbb{N}$ and $S \subseteq \{0,1,\ldots, n\}$, $S \neq \emptyset$. 
 If $q = 2$, assume that $n \not\in S$ and $S \neq \{0, 1, \ldots, n-1\}$. 
 If $q = 3$, further assume $S \neq \{0, n\}$. 
 Then $\delta_S$ has maximum least period $q^n-1$.
\end{lem}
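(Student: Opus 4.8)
The goal is to show that the cyclic function $\delta_S : \mathbb{Z}_{q^n-1} \to \mathbb{F}_p$, the indicator of the set $\Omega(S)$ of residues whose canonical integer representative has $q$-ary digits in $\{0,1\}$ and Hamming weight lying in $S$, has least period exactly $q^n-1$ (outside the listed exceptions). Since the least period $r$ of $\delta_S$ must divide $q^n-1$, it suffices to rule out every proper divisor $r \mid q^n-1$, $r < q^n-1$. The clean way to do this is: if $r$ were a period, then $\delta_S(i) = \delta_S(i + r)$ for all $i$; I would exploit this by locating, for each candidate $r$, a specific index $i$ with $i \in \Omega(S)$ but $i + r \pmod{q^n-1} \notin \Omega(S)$ (or vice versa). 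The natural elements of $\Omega(S)$ to test are the ``lightest'' and ``heaviest'' ones, e.g. $1 = q^0$, $q^{n-1}$, $Q_n = 1 + q + \cdots + q^{n-1}$ (when $n \in S$), and small sums $1 + q$, etc., since adding a proper divisor $r$ to these tends to create a $q$-ary digit outside $\{0,1\}$ or to change the weight.

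The key structural step is to translate ``$r \mid q^n-1$, $r$ proper'' into arithmetic leverage. I would write $q^n - 1 = rs$ with $s \geq 2$, so $r = (q^n-1)/s$. A useful reduction: by Lemma \ref{lem: period 0}, $\delta_{n-S}$ is a shift of the reversal of $\delta_S$, and shifts and reversals preserve least period; so I may replace $S$ by $n - S$ whenever convenient (this symmetry is what pairs up the exceptions). Then I would split into cases according to the smallest element and largest element of $S$. The cleanest subcase is when $1 \in \Omega(S)$, i.e. $1 \in S$: then periodicity forces $1 + r, 1 + 2r, \ldots$ all in $\Omega(S)$; walking through the multiples $jr$ and examining their $q$-ary expansions (most of which will have a digit $\geq 2$ once $s$ is not tiny, or will have the wrong weight) produces a contradiction. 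Similar arguments handle $0 \in S$ (start from $0 \in \Omega(S)$, so $r \in \Omega(S)$, forcing $r$ itself to be a $\{0,1\}$-digit sum of weight in $S$ — very restrictive) and $n \in S$ (start from $Q_n$, noting $Q_n + r \equiv r - 1 \pmod{q^n-1}$). For $S$ not containing $0$, $1$, or $n$, pick the least element $w \geq 2$ of $S$ and test $i = q^{0} + q^{1} + \cdots + q^{w-1}$ or a spread-out variant; adding $r$ and reducing mod $q^n-1$ will, for dimension reasons, land outside $\Omega(w') $ for all $w' \in S$. Throughout, the $q=2$ case is simpler (the digit-bound is automatic) but has fewer escape routes, which is why the exceptions $S = \{0,1,\ldots,n-1\}$ and $n \in S$ appear; the $q=3$, $S = \{0,n\}$ exception should fall out of the $0 \in S$ and $n \in S$ analyses colliding when $q^n - 1$ has the divisor $(q^n-1)/2$.

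The main obstacle I anticipate is the case analysis near the exceptions: when $S$ is ``almost everything'' (like $\{0,1,\ldots,n-1\}$, or $\{0,n\}$ for small $q$), the set $\Omega(S)$ is large or highly symmetric, so a crude ``lightest element'' test fails and one genuinely needs to find the right witness index $i$ and the right divisor structure of $q^n-1$. Handling the divisor $r = Q_n = (q^n-1)/(q-1)$ deserves separate care, since $\Omega(S)$ is manifestly related to $Q_n$ via Lemma \ref{lem: period 0}, and one must check $Q_n$ is not a period — likely by testing $i = 1$ against $i + Q_n = 1 + Q_n$, whose $q$-ary expansion has a digit $2$ in position $0$ (for $q \geq 3$) or weight $n$ in position pattern (for $q = 2$, where $1 + Q_n = q^n$ wraps to... one checks directly). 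I would organize the whole proof as: (1) reduce to ruling out proper divisors; (2) use the $S \leftrightarrow n-S$ symmetry to halve the work; (3) dispatch $r = Q_n$ by hand; (4) for general proper $r \neq Q_n$, run the ``track multiples of $r$ starting from a minimal-weight element of $\Omega(S)$'' argument, carving out exactly the stated exceptions; (5) assemble and note this, combined with Lemma \ref{lem: gamma and delta functions}, yields Theorems \ref{thm: main q = 2} and \ref{thm: main q > 2}.
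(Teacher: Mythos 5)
Your framing is right---the least period $r$ divides $q^n-1$, so one must exhibit, for every proper divisor $r$, a witness $i$ with $\delta_S(i)\neq\delta_S(i+r)$---and your use of the reflection $S\leftrightarrow n-S$ (via Lemma \ref{lem: period 0}) and of complementation matches the paper. But the proposal has a genuine gap: the entire combinatorial engine that produces the witness is left as ``walking through the multiples $jr$ and examining their $q$-ary expansions \ldots\ produces a contradiction,'' and this does not work as stated. First, when $r$ is a large divisor (e.g.\ $r=(q^n-1)/2$), the orbit $\{i+jr\}$ has only two elements, so there are no ``most multiples'' to appeal to; you must engineer a single good witness for each $r$ and each $S$, and which witness works depends on both. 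Second, and more seriously for $q=2$: digits are automatically in $\{0,1\}$, so the only way to leave $\Omega(S)$ is to change the Hamming weight, and the weight of $i+r$ is governed by carries, which your plan does not control. The paper's key device is precisely carry control: write $r=\sum_{i\in E}r_iq^i$ and build witnesses whose $q$-supports are chosen disjoint from $E$ (or from $E\cup\{m\}$ where $m=\min([0,n-1]\setminus E)$), so that adding or subtracting $r$ changes the weight by an exactly computable amount, namely by $\pm(|E|-1)$ or $n-|E|$ and the like. From this one deduces structural consequences for $S$ (for $q=2$: a chain $s,s+1,\dots,n-1,0\in S$, which collides with the same argument applied to the complement $T=[0,n-1]\setminus S$; for $q>2$: first $|E|>\min(S^+)$, then $E=[0,n-1]$ and all $r_i=q-1$, i.e.\ $r=q^n-1$). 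Without some substitute for this digit-support analysis of $r$ itself, your step (4) cannot be carried out, and the exceptions will not ``carve themselves out.''

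Two smaller points. Your intuition for the $(q,S)=(3,\{0,n\})$ exception is correct (the two zero-runs of $\delta_{\{0,n\}}$ have equal length $Q_n-1$ exactly when $q=3$), and your $r=Q_n$ test via $1+Q_n$ works only when $1\in S$; for general $S$ it is another instance of the same missing witness-selection problem. So the skeleton is sound but the proof's central idea---parametrizing $r$ by its $q$-support $E$ and doing carry-free arithmetic against $E$---is absent.
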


\begin{proof}
The cases when $n = 1$ are easy to check so we assume that $n > 1$. 
Note $\delta_0$ has Hamming weight $1$ and thus least period $q^n-1$. 
Similarly when $q > 2$, $\delta_n$ has Hamming weight $1$ (since $\delta_n(k) \neq 0$ if and only if $k \bmod (q^n-1) = Q_n$ for $q > 2$); 
hence it has least period $q^n-1$. 
If $q > 2$, 
then
$\delta_{\{0,n\}}$ has exactly two runs of `$0$'s (since $n > 1$). Their lengths are $Q_n - 1$ and $q^n-2 - Q_n$, respectively.  
It is easy to check these lengths are distinct for $q > 3$; it follows $\delta_{\{0,n\}}$ has least period $q^n-1$, when $q > 3$. 
We may now assume $S^+ \neq \emptyset$ and $\min(S^+) < n$.

Let $r$ be the least period of $\delta_S$. Necessarily $1 \leq r \mid (q^n-1)$. 
Write $r = \sum_{i \in E} r_i q^i$ for some (non-empty) subset $E \subseteq [0, n-1]$ and some integers $r_i$ with $1 \leq r_i \leq q-1$, for $i \in E$.

There are some technical differences between the cases when $q = 2$ and $q > 2$; we will treat these two separately.
\\

{\bf Case 1.} Assume $q = 2$. 
Let us suppose, by way of contradiction, that $r < 2^n-1$. Hence $|E| \leq n-1$.
Since $r \mid 2^n-1$ is odd, then $m := \min([0, n-1] \setminus E) \geq 1$. By definition, $m \notin E$ but $m -1\in E$. 
Define the integer $\zeta := r + 2^{m -1} = 2^m + \sum_{i \in E \setminus\{m-1\}} 2^i$, of weight $|E|$. 

We can assume there exists $s \in S$ with $s \geq |E|$, otherwise we consider instead the complement of $\delta_S$ (obtained by interchanging `1's and `0's);
it has equal least period and corresponding set $S$ satisfying $s \geq |E|$ for some $s \in S$. 
Now let $s \in [|E|, n-1] \cap S$. Note that $n - (|E| + 1) \geq s - |E|$. 
Hence we can find $\beta \in [0, 2^n-1)$ with $w_2(\beta) = s - |E|$ and $\supp_2(\beta) \cap(E \cup \{m\}) = \emptyset$. Let $\alpha = \beta + \zeta$ for any such $\beta$. 
By construction $w_2(\alpha) = s$. Then $\alpha - r = \beta + (\zeta - r) = \beta + 2^{m-1}$ has weight $s - |E| + 1 \in S$.

Since $n - (s - |E| + 1) = n - s -1 + |E| \geq |E|$, we can find $k \in (0, 2^n-1)$ of weight $s - |E| + 1 \in S$ with $\supp_2(k) \cap E = \emptyset$. 
Note that $w_2((k + r) \bmod{(2^n-1)}) = (s + 1) \bmod{n}$. It is clear that $\delta_S(k + r) = \delta_S(k) = 1$. Then $(s + 1) \bmod{n} \in S$. It follows that $s, s+1, s+2, \ldots, n-1, 0 \in S$. 

Consider the complement $\delta_T$ of $\delta_S$ with equal least period and corresponding set of weights $T = [0, n-1] \setminus S$.
Since $S \neq [0, n-1]$ by assumption, $T \neq \emptyset$.
Now if there exists $t \in T$ such that $t \geq |E|$, then, 
similarly as we did before, we obtain $t, t+1, \ldots, n-1, 0 \in T$. In particular $0 \in S \cap T$, a contradiction. Hence $t < |E|$ for all $t \in T$. 
Particularly $\max(T) < |E|$. 

Let $M \subset E$ with $|M| = \max(T)$ and define $\gamma := \sum_{i \in M} 2^i$. 
Clearly $\delta_T(\gamma) = 1$. Note $\gamma < r$ and $\gamma + 2^n-1 - r \in (0, 2^n-1)$. Since $r \mid (2^n-1 - r)$, then 
$\delta_T(\gamma + 2^n-1 - r) = \delta_T(\gamma) = 1$. Hence $w_2(\gamma + 2^n-1 - r) \in T$. However
$$
\gamma + 2^n-1 - r = \sum_{i \in M} 2^i + \sum_{i \in [0, n-1] \setminus E} 2^i 
$$
has weight $|M| + n - |E| = \max(T) + n - |E| > \max(T)$, a contradiction. This concludes the proof for the case when $q = 2$.  
\\

{\bf Case 2.} Let $q > 2$. 
We may assume $\min(S^+) \leq n/2$. Indeed, otherwise we consider instead $\delta_{n - S}$, with $\min((n - S)^+) < n/2$. 
Since $\delta_{n - S}$ is a shift of the reversal of $\delta_S$ by Lemma \ref{lem: period 0}, it has the same least period as $\delta_S$.  

We claim that $|E| > \min(S^+)$. Indeed, suppose on the contrary that $|E| \leq \min(S^+)$. 
To obtain a contradiction, first we show that 
$r_i = 1$ for all $i \in E$. For this, note that if $0 \in S$, then $ \delta_S(r) = \delta_S(0 + r) = \delta_S(0) = 1$ implies that $r_i = 1$ for all $i \in E$.
Now assume $0 \not\in S$. 
Since $n - |E| \geq n - \min(S^+) \geq \min(S^+)$, 
there exists a subset $L \subseteq [0,n-1] \setminus E$
 with $|L| = \min(S^+)$. Pick one such subset $L$ and define $\alpha := \sum_{i \in L} q^i$. Clearly $\delta_S(\alpha) = 1$. 
 Since $\alpha + r = \sum_{i \in L}q^i + \sum_{i \in E} r_i q^i$ with $L \cap E = \emptyset$ and $|L| + |E| = \min(S^+) + |E| \leq 2\min(S^+) \leq n$, 
 it follows that $\alpha + r \in (0, q^n - 1]$. Because $\delta_S(\alpha + r) = 1$ but $0 \notin S$, 
 then $\alpha + r < q^n-1$ strictly. Consequently $r_i = 1$ for all $i \in E$.
 
 Let $M \subset [0, n-1]$ such that $|M| = \min(S^+)$ and $E \subseteq M$. 
 Define $\beta := \sum_{i \in M} q^i$. Then $\delta_S(\beta - r) = \delta_S(\beta) = 1$, where
 $\beta - r = \sum_{i \in M \setminus E} q^i$. 
 Thus $|M| - |E| = \min(S^+) - |E| \in S$. It is impossible that $\min(S^+) - |E| \in S^+$, otherwise the minimality of $\min(S^+)$ is contradicted.
 Necessarily $|E| = \min(S^+)$
and $|E| \in S$. Since $r_i = 1$ for all $i \in E$ as well, we get $\delta_S(r) = 1$. Then $\delta_S(2r) = 1$. 
 However the assumptions $q > 2$ and $|E| = \min(S^+) \leq n/2$ imply $\delta_S(2r) = 0$ since $2r = \sum_{i \in E} 2 q^i < q^n-1$ has $q$-digits not in $\{0,1\}$. We thus obtain a contradiction.
 The claim follows. 

Let $\ell \in S^+$
 such that $|E| > \ell$, and let $G \subset E$ with $|G| = \ell$. 
 Define $\gamma := \sum_{i \in G} q^i$. Clearly $\delta_S(\gamma) = 1$. Since $r \mid (q^n-1 - r)$, we also have $\delta_S(\gamma + q^n - 1 - r) = 1$. 
 Note that $\gamma < r$ and
 $\gamma + q^n-1 - r \in (0, q^n-1)$. Moreover
 \begin{align*}
 q^n-1 + \gamma - r &= \sum_{i \in [0, n-1]} (q-1)q^i + \sum_{i \in G}(1 - r_i)q^i - \sum_{i \in E \setminus G} r_i q^i\\
 &= \sum_{i \in G}(q - r_i)q^i + \sum_{i \in E \setminus G}(q-1 - r_i)q^i + \sum_{i \in [0,n-1] \setminus E} (q-1)q^i.
 \end{align*}
 Because the three subsets $G, E \setminus G, [0, n-1] \setminus E \subset [0, n-1]$ are pairwise disjoint, and each of the coefficients of the $q$-powers belongs to the set $[0, q-1]$,
 the above is the $q$-ary representation of $\gamma + q^n-1 - r \in (0, q^n-1)$. 
 Since $\delta_S(\gamma + q^n-1 - r) = 1$, the coefficients of the $q$-powers above must all belong to the set $\{0,1\}$. Because $q > 2$, i.e., $q-1 > 1$,
 it follows from the equation above that $E = [0, n-1]$ and $r_i = q-1$ for all $i \in G$. Since $G \subset E = [0, n-1]$ is arbitrary (but of size $\ell > 0$) 
 we get that $r_i = q-1$ for all $i \in [0, n-1]$.
 Thus $r = \sum_{i \in [0, n-1]}(q-1) q^i = q^n-1$. 
\end{proof}

Finally we are ready to prove Theorems \ref{thm: main q = 2} and \ref{thm: main q > 2}.

\begin{proof}[{\bf Proof of Theorem \ref{thm: main q = 2}}]
The exceptions are explained by the fact that 
the number of non-zero coefficients in an irreducible polynomial over $\mathbb{F}_2$ must be odd, and its leading, constant terms (for $n \geq 2$) have coefficient $1$.

Assume $(c,W)$ is none of the exceptions listed in (\ref{eqn: exc1}). 
First we consider the case when $0 \not\in W$, i.e., $W \subseteq [1,n]$.
Now let $R = n-W$ be the reflection of $W$. 
Clearly $R \neq \emptyset$ and 
\begin{equation}\label{eqn: exc}
R \subseteq [0, n-1]; \text{ particularly }  (c, R) \neq (0, \{0\}), \ (0, [1, n-1]), \ (1, [0, n-1]).
\end{equation}
For $q = 2$,  
the $\Delta_{R,c}$ function in Lemma \ref{lem: gamma and delta functions} (ii) becomes
$\Delta_{R,c} = \delta_R + (c+1)\delta_0 = \delta_S$, where
 $$
 S := \begin{cases}
                                       R & \mbox{ if } c = 1;\\
                                       R \cup \{0\} & \mbox{ if } c = 0 \text{ and } 0 \not\in R;\\
                                       R \setminus \{0\} & \mbox{ if } c = 0 \text{ and } 0 \in R.
                                      \end{cases}
$$
In each of these cases, $S \subsetneq [0, n-1]$ with $S \neq \emptyset$ (which follows from (\ref{eqn: exc})). By Lemma \ref{lem: period 1}, $\Delta_{R,c} = \delta_S$ has 
maximum least period $2^n-1$. Then Lemma \ref{lem: gamma and delta functions} implies there exists an irreducible polynomial $P(x)$ of degree $n$ over $\mathbb{F}_2$ such that
$c = \sum_{v \in R}[x^{n - v}]P(x) = \sum_{w \in W}[x^w]P(x)$. Thus the result holds when $W \subseteq [1,n]$ is not any of the exceptions. 
Next we consider the cases when $0 \in W$
and $W$ is not any of the exceptions. The case when $W = \{0\}$ is clear and so we may assume that $W \setminus \{0\} \neq \emptyset$. 
Note for $P(x) \in \mathbb{F}_{2}[x]$ irreducible of degree $n \geq 2$, that $\sum_{w \in W}[x^w]P(x) = c$ if and only if $\sum_{w \in W'}[x^w]P(x) = k$, 
where $W' = W\setminus \{0\}$ and $k = c + 1$.
Thus $W' \subseteq [1,n]$ and since $(c, W)$ is none of the exceptions in (\ref{eqn: exc1}), one can check that $(k, W')$ is also none of the exceptions in (\ref{eqn: exc1}). Result follows from the previous arguments.
\end{proof}

Before we prove Theorem \ref{thm: main q > 2}, we need the following simple fact.

\begin{lem}\label{lem: period by composition}
 Let $N \in \mathbb{N}$, let $A, B$ be non-empty sets, let $f : \mathbb{Z}_N \to A$ and let $\pi: A \to B$. Then the least period of $f$ is at least as large as the least period of $\pi \circ f$.  
\end{lem}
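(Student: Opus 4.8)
The plan is to prove Lemma~\ref{lem: period by composition} by a direct argument: show that any period of $f$ is automatically a period of $\pi \circ f$, so that the least period of $\pi\circ f$ divides (and in particular is no larger than) the least period of $f$. Let $r$ denote the least period of $f$, so that $f(i) = f(i + \bar r)$ for all $i \in \mathbb{Z}_N$. First I would apply $\pi$ to both sides: for every $i \in \mathbb{Z}_N$ we get $(\pi\circ f)(i) = \pi(f(i)) = \pi(f(i + \bar r)) = (\pi\circ f)(i + \bar r)$. Hence $r$ is a period of $\pi \circ f$, and therefore the least period of $\pi \circ f$, being the smallest positive period, is at most $r$. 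This is all that is claimed.

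One small point worth spelling out is that ``period'' here means what the paper defined just before Lemma~\ref{thm: connection}: $r$-periodicity of a function on $\mathbb{Z}_N$ means $f(i) = f(i+\bar r)$ for all $i$, and the least period is the smallest positive such $r$; the least period always divides $N$, so in particular the set of positive periods of any function on $\mathbb{Z}_N$ is non-empty (it contains $N$ itself), which guarantees the least period of $\pi\circ f$ is well defined. I would mention this only in passing. Note also that no injectivity or surjectivity hypothesis on $\pi$ is needed — it is used purely as a function $A \to B$ — which is exactly why the conclusion is an inequality rather than an equality (collapsing values under $\pi$ can only shrink, never enlarge, the least period).

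There is essentially no obstacle here; the lemma is deliberately elementary and is invoked in the proof of Theorem~\ref{thm: main q > 2} (presumably with $\pi$ a map such as $x \mapsto x^{q-1}$ or a projection $\F \to \F/\!\sim$ that forgets the precise value $c$ but records whether it is hit). If anything, the only thing to be careful about is not overclaiming: one should resist the temptation to assert equality of least periods, since that fails in general. So the write-up will be three or four lines: recall the definition of period, apply $\pi$ to the defining identity of an $r$-period of $f$, conclude $r$ is a period of $\pi\circ f$, and read off the inequality on least periods.

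\begin{proof}[{\bf Proof of Lemma~\ref{lem: period by composition}}]
Let $r$ be the least period of $f$, so that $f(i) = f(i + \bar r)$ for every $i \in \mathbb{Z}_N$. Applying $\pi$ to both sides gives
$$
(\pi \circ f)(i) = \pi(f(i)) = \pi(f(i + \bar r)) = (\pi \circ f)(i + \bar r)
$$
for every $i \in \mathbb{Z}_N$. Thus $\pi \circ f$ is $r$-periodic. Since the least period of $\pi \circ f$ is by definition the smallest positive integer that is a period of $\pi \circ f$, it is at most $r$, which is the least period of $f$.
\end{proof}
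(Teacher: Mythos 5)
Your proof is correct and is essentially identical to the paper's: both apply $\pi$ to the identity $f(i)=f(i+\bar r)$ to conclude that $\pi\circ f$ is $r$-periodic, hence its least period is at most $r$. No issues.
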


\begin{proof}
Let $r$ be the least period of $f$. Note for every $m \in \mathbb{Z}_N$ we have $\pi \circ f(m + \bar{r}) = \pi \circ f(m)$. Then $\pi \circ f$ is $r$-periodic; hence the least period
of $\pi \circ f$ is at most $r$.
\end{proof}

\begin{proof}[{\bf Proof of Theorem \ref{thm: main q > 2}}]
Let $R = n - W$.
Define the function $\pi : \F \to \{0,1\} \subseteq \mathbb{F}_p$ by $\pi(k) = 1$ if $k \neq 0$ and $\pi(0) = 0$. 
Consider the function $g : \mathbb{Z}_{q^n-1} \to \mathbb{F}_p$ given by $g = \pi \circ \gamma_{R,c}$, where $\gamma_{R,c}$ is the function in Lemma \ref{lem: gamma and delta functions} (i). 
One can check that $g = \delta_S$, where $S \subseteq [0,n]$ is defined as follows.
$$
S = \begin{cases}
    R, & \mbox{ if } c = 0 \text{, or } c \in \F\setminus\{0,1\} \text{ and } 0 \in R;\\
    R \cup \{0\}, & \mbox{ if } c \neq 0 \text{ and } 0 \not\in R;\\
    R\setminus \{0\}, & \mbox{ if } c=1 \text{ and } 0 \in R.
    \end{cases}
$$
By Lemma \ref{lem: period 1}, $\delta_S$ has maximum least period except when $(q, S) = (3, \{0,n\})$. 
If $(q, S) \neq (3, \{0, n\})$, then, by Lemma \ref{lem: gamma and delta functions} (i) together with Lemma \ref{lem: period by composition}, 
there exists an irreducible polynomial $P(x)$ of degree $n$ over $\F$ with $S_W(P) = S_{n - R}(P) \neq c$.
Now consider the case when $(q,S) = (3, \{0,n\})$. By the definition of $S$ and of $R$, we have
either

(i) $W = \{0,n\}$ with $c = 0$ or $c = 2$, or

(ii) $W = \{0\}$ with $c \neq 0$.

The result follows here from the elementary fact that every element of $\F^*$ is the norm of an element of degree $n$ over $\F$, together with the assumption $q > 2$.
\end{proof}

\end{document}